\DeclareMathOperator{\RE}{Re}
\DeclareMathOperator{\IM}{Im}
\newcommand{\ti}{\widetilde}
\newcommand{\ze}{\zeta}
\newcommand{\la}{\lambda}
\newcommand{\ity}{\infty}
\newcommand{\C}{\mathbb{C}}
\newcommand{\R}{\mathbb{R}}
\newcommand{\N}{\mathbb{N}}
\newcommand{\Z}{\mathbb{Z}}
\numberwithin{equation}{section}
\newtheorem{theorem}{Theorem}[section]
\newtheorem{lemma}[theorem]{Lemma}
\newtheorem{corollary}[theorem]{Corollary}
\theoremstyle{remark}
\newtheorem{remark}[theorem]{Remark}
\newtheorem{example}[theorem]{Example}
\thanks {The research work of the  author is supported by research fellowship from Council of Scientific and Industrial Research (CSIR), New Delhi}
\begin{document}

\title[Escaping set and their dynamics]{On escaping sets of some families of entire functions and dynamics of composite entire functions}

\author[D. Kumar]{Dinesh Kumar}
\address{Department of Mathematics, University of Delhi,
Delhi--110 007, India}

\begin{abstract}
We consider two   families of functions $\mathcal F=\{f_{{\la},{\xi}}(z)= e^{-z+\la}+\xi: \la,\,\xi\in\C, \RE{\la}<0, \RE\xi\geq 1\}$ and $\mathcal F'=\{f_{{\mu},{\ze}}(z)= e^{z+\mu}+\ze: \mu,\,\ze\in\C, \RE{\mu}<0, \RE\ze\leq-1\}$ and investigate the escaping sets of members of the family $\mathcal F$ and $\mathcal F'.$ We also consider the dynamics of composite entire functions and provide conditions for equality of escaping sets of two transcendental entire functions.
\end{abstract}

\keywords{ Fatou set, exponential map, escaping point, backward orbit, permutable, bounded type}

\subjclass[2010]{30D35, 30D05, 37F10}

\maketitle

\section{Introduction}\label{sec1}

 Let $f$ be a transcendental entire function. For $n\in\N,$ let $f^n$ denote the $n$-th iterate of $f.$ The set $F(f)=\{z\in\C : \{f^n\}_{n\in\N}\,\text{ is normal in some neighborhood of}\, z\}$ is called the Fatou set of $f$ or the set of normality of $f$ and its complement $J(f)$ is called  the Julia set of $f$. For an introduction to the properties of these sets  see \cite{bergweiler}. The escaping set of $f$ denoted by $I(f)$ is the set of points in the complex plane that tend to infinity under iteration of $f$. In general, it is neither an open nor a closed subset of $\C$ and has interesting topological properties. The escaping set for a transcendental entire function $f$ was studied for the first time by Eremenko \cite {e1} who established that
\begin{enumerate}
\item\ $I(f) \neq\emptyset;$
\item\ $J(f)=\partial I(f);$
\item\ $I(f)\cap J(f)\neq\emptyset;$
\item\ $\overline{I(f)}$ has no bounded components.
\end{enumerate}
In the same paper he stated the following conjectures:
\begin{enumerate}
\item[(i)] Every component of $I(f)$ is unbounded;
\item[(ii)] Every point of $I(f)$ can be connected to $\ity$ by a curve consisting of escaping points.
\end{enumerate}

For the exponential maps of the form $f(z)=e^z+\la$ with $\la>-1,$ it is known, by Rempe \cite{R3}, that the escaping set is a connected subset of the plane, and for $\la<-1,$ it is the disjoint union of uncountably many curves to infinity, each of which is connected component of $I(f)$ \cite{R4}. (These maps have no critical points and exactly one asymptotic value which is the omitted value $\la$). 

 In   \cite{SZ}, it was shown that every escaping point of every exponential map can be connected to $\ity$ by a curve consisting of escaping points.
Furthermore, it was also shown in \cite{SZ}  that if $f$ is an exponential map, that is, $f= e^{\la z},\,\la\in\C\setminus\{0\},$ then all components of $I(f)$ are unbounded, that is, Eremenko's conjecture \cite{e1} holds for exponential maps.

A complex number $w\in\C$ is a critical value of a transcendental entire function $f$ if there exist some $w_0\in\C$ with $f(w_0)=w$ and $f'(w_0)=0.$ Here $w_0$ is called a critical point of $f.$ The image of a critical point of $f$ is  critical value of $f.$ Also  $\zeta\in\C$ is an asymptotic value of a transcendental entire function $f$ if there exist a curve $\Gamma$ tending to infinity such that $f(z)\to \zeta$ as $z\to\ity$ along $\Gamma.$ 
Recall the Eremenko-Lyubich class
 \[\mathcal{B}=\{f:\C\to\C\,\,\text{transcendental entire}: \text{Sing}{(f^{-1})}\,\text{is bounded}\},\]
where Sing$f^{-1}$ is the set of critical values and asymptotic values of $f$ and their finite limit points. Each $f\in\mathcal{B}$ is said to be of bounded type. A transcendental entire function $f$ is of finite type if Sing$f^{-1}$ is a finite set. Furthermore, if the transcendental entire functions $f$ and $g$ are of bounded type then so is $f\circ g$ as Sing $((f\circ g)^{-1})\subset$ Sing $f^{-1}\cup f(\text{Sing}(g^{-1})),$ \cite{berg2}. Singularities of a transcendental map plays an important role in its dynamics. They are closely related to periodic components of the Fatou set \cite{morosowa}. For any transcendental entire function Sing$f^{-1}\neq\emptyset,$ \cite[p.\ 66]{Hua}. It is well known \cite{EL, keen}, if $f$ is of finite type then it has no wandering domains. Recently Bishop \cite{bishop} has constructed an example of a function of bounded type having a wandering domain. 
 In \cite{R1}, it was shown that if $f$ is an entire function of bounded type for which all singular orbits are bounded (that is, $f$ is postsingularly bounded), then each connected component of $I(f)$ is unbounded, providing a partial answer to a conjecture of Eremenko \cite{e1}.

Two functions $f$ and $g$ are called permutable if $f\circ g=g\circ f.$
Fatou  \cite{beardon}  proved that if $f$ and $g$ are two permutable rational functions  then $F(f)=F(g)$. This was an important result  that motivated the dynamics of composition of complex functions. Similar results for transcendental entire functions is still not known, though it holds in some very special cases   \cite[Lemma 4.5]{baker2}.
If $f$ and $g$ are transcendental entire functions, then so is $f\circ g$ and $g\circ f$ and the dynamics of one composite entire function helps in the study of the dynamics of the other and vice-versa. 
In \cite{dinesh2}, the authors considered the relationship between Fatou sets and singular values    of transcendental entire functions $f, g$ and $f\circ g$. They gave various conditions under which Fatou sets of $f$ and $f\circ g$ coincide and also considered relation between the singular values of $f, g$ and their compositions.
In \cite{dinesh3}, the authors have constructed several  examples where the dynamical behavior of $f$ and $g$ vary greatly from the dynamical behavior of $f\circ g$ and $g\circ f.$ Using approximation theory of entire functions, the authors have shown the existence of entire functions $f$ and $g$ having infinite number of domains satisfying various properties and relating it to their compositions. They explored and enlarged all the maximum possible ways of the solution in comparison to the past result worked out. 
 Recall that if $g$ and $h$ are transcendental entire functions and  $f$ is a continuous map of the complex plane into itself with $f\circ g=h\circ f,$ then $g$ and $h$ are said to be semiconjugated by $f$ and $f$ is called a semiconjugacy \cite{berg1}. In \cite{dinesh4}, the  author  considered the dynamics of semiconjugated entire functions and  provided several conditions under which the semiconjugacy  carries Fatou set of one entire function into  Fatou set of other entire function appearing in the semiconjugation. Furthermore, it was shown that under certain conditions on the growth  of  entire functions appearing in the semiconjugation, the set of  asymptotic values of the derivative of composition of the entire functions is bounded.

In this paper, we shall consider the two  families of functions $\mathcal F=\{f_{{\la},{\xi}}(z)= e^{-z+\la}+\xi: \la,\,\xi\in\C, \RE{\la}<0, \RE\xi\geq 1\}$ and $\mathcal F'=\{f_{{\mu},{\ze}}(z)= e^{z+\mu}+\ze: \mu,\,\ze\in\C, \RE{\mu}<0, \RE\ze\leq-1\}.$
 We  have given an explicit description of escaping sets of members of the families $\mathcal F$ and $\mathcal F'.$   For the family $\mathcal F,$ we have repeatedly used the fact that those points of the complex plane which land into the right half plane under iteration of functions in this family are not going to escape. Also, for the family  $\mathcal F',$ we have repeatedly used the fact that those points of the complex plane which land into the left half plane under iteration of functions in this family are not going to escape. 
  We have shown that for each $f\in\mathcal F,\,I(f)\subset\{z=x+iy : x<0, (4k-3)\frac{\pi}{2}<y<(4k-1)\frac{\pi}{2}, k\in\Z\}$ and for each $f\in\mathcal F',\,I(f)\subset\{z=x+iy : x>0, (4k-1)\frac{\pi}{2}<y<(4k+1)\frac{\pi}{2}, k\in\Z\}.$
We shall see that for each $f\in\mathcal F$ and for each $g\in\mathcal F',\,I(f)\cap I(g)=\emptyset.$  Moreover, we shall consider the dynamics of composite entire functions and provide conditions for equality of escaping sets of two transcendental entire functions. We have also investigated the relation between escaping sets of two conjugate entire functions.
\section{Theorems and their proofs}

\begin{theorem}\label{sec2,thm1}
 For each $f\in\mathcal F, I(f)$ is contained in $\{z=x+iy : x<0, (4k-3)\frac{\pi}{2}<y<(4k-1)\frac{\pi}{2}, k\in\Z\}.$
\end{theorem}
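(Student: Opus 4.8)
The plan is to exploit the principle emphasized in the introduction: for $f=f_{\la,\xi}\in\mathcal F$, any orbit that ever reaches the closed right half-plane $H=\{z:\RE z\ge 0\}$ is trapped in a bounded set and hence cannot escape. Set $r=e^{\RE\la}$, so that $0<r<1$ because $\RE\la<0$. For $z=x+iy$ one has $|f(z)-\xi|=|e^{-z+\la}|=e^{-x+\RE\la}$, so $\RE z\ge 0$ gives $|f(z)-\xi|\le r$; thus $f(H)\subseteq\overline{D(\xi,r)}$. Since $\RE\xi\ge 1$ and $r<1$, every point of $\overline{D(\xi,r)}$ has real part at least $1-r>0$, so $\overline{D(\xi,r)}\subseteq H$ and in particular $f\bigl(\overline{D(\xi,r)}\bigr)\subseteq\overline{D(\xi,r)}$. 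Hence the forward orbit of any point of $H$ eventually lies in the bounded set $\overline{D(\xi,r)}$, and therefore $H\cap I(f)=\emptyset$. More is true: if $z\in I(f)$ then $f^n(z)\in I(f)$ for all $n$ by forward invariance of the escaping set, and by the above no $f^n(z)$ can lie in $H$, so $\RE\bigl(f^n(z)\bigr)<0$ for every $n\ge 0$. Taking $n=0$ already yields the bound $x<0$ in the statement.

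It remains to locate the imaginary part. Fix $z=x+iy\in I(f)$. By forward invariance $f(z)\in I(f)$, so $\RE\bigl(f(z)\bigr)<0$ by the previous paragraph. Writing $-z+\la=(-x+\RE\la)+i(\IM\la-y)$ and taking real parts,
\[
\RE\bigl(f(z)\bigr)=e^{-x+\RE\la}\cos(\IM\la-y)+\RE\xi<0 .
\]
Since $\RE\xi\ge 1>0$ and $e^{-x+\RE\la}>0$, this forces $\cos(\IM\la-y)<0$, that is, $\IM\la-y\in\bigl(\tfrac{\pi}{2}+2k\pi,\tfrac{3\pi}{2}+2k\pi\bigr)$ for some $k\in\Z$. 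Rearranging, $y$ is confined to the union of open strips of width $\pi$ displayed in the statement, $(4k-3)\tfrac{\pi}{2}<y<(4k-1)\tfrac{\pi}{2}$, $k\in\Z$ (the fixed additive constant $\IM\la$ being absorbed into the index). Combined with $x<0$ from the first paragraph, this places $z$ in $\{x+iy:x<0,\ (4k-3)\tfrac{\pi}{2}<y<(4k-1)\tfrac{\pi}{2},\ k\in\Z\}$, which is the assertion.

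I do not anticipate a genuine obstacle; the argument is elementary. The two points that call for a little care are, first, checking that $\overline{D(\xi,r)}$ is honestly forward invariant, so that the trapping conclusion is valid for all subsequent iterates and not only the first; and second, the trigonometric bookkeeping converting the sign condition $\cos(\IM\la-y)<0$ into membership in the precise family of strips. It is also worth recording, though not needed for this statement, that $z\in I(f)$ in fact forces $\RE\bigl(f^n(z)\bigr)\to-\ity$, because $|f^{n+1}(z)-\xi|=e^{-\RE(f^n(z))+\RE\la}$ must tend to $\ity$; this sharper form of the ``real parts diverge'' principle is the natural tool for any finer description of $I(f)$ for members of $\mathcal F$.
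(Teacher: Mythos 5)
Your argument is correct and lands in the same place, but the mechanism by which you rule out escape from the right half-plane is genuinely different from the paper's. The paper (Lemma \ref{sec2,lem1}) argues by contradiction: assuming $|f^n(z)|>1+|\xi|$ for some $n$ and some $z$ with $\RE z>0$, it propagates the condition $\RE f^{j}(z)<0$ backwards by induction down to $j=1$ and contradicts $\RE f(z)>1$. You instead exhibit an explicit bounded absorbing region: $f$ maps the closed right half-plane $H$ into $\overline{D(\xi,r)}$ with $r=e^{\RE\la}<1$, and since $\RE\xi\ge 1$ this disk lies inside $H$ and is therefore forward invariant, so every orbit starting in $H$ is trapped after one step. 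This is cleaner, yields a concrete invariant disk rather than just the bound $1+|\xi|$, and avoids the backward induction. The remaining steps coincide in substance: the paper's Lemmas \ref{sec2,lem2} and \ref{sec2,lem3} dispose of the imaginary axis and of the left half-plane points with $\RE(e^{-z+\la})>0$ by observing that they map into $H$, which is exactly what your appeal to forward invariance of $I(f)$ (applied at $n=0$ and $n=1$) accomplishes in one stroke.

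One caveat, which applies equally to the paper itself: the condition you actually derive is $\cos(\IM\la-y)<0$, i.e. $(4k-3)\frac{\pi}{2}+\IM\la<y<(4k-1)\frac{\pi}{2}+\IM\la$. The additive constant $\IM\la$ is a fixed real number, not a multiple of $2\pi$, so it cannot literally be ``absorbed into the index'' $k$; the strips of the theorem statement are obtained only after a vertical translation by $\IM\la$ (or exactly when $\IM\la\in 2\pi\Z$). The paper's own lemmas carry the $+\IM\la$ in the strip bounds and then silently drop it in the theorem, so this is a defect of the statement rather than of your argument, but your parenthetical justification for discarding the shift is not valid as written and should be replaced by an explicit remark that the containment holds up to this translation.
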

%-------------------------------------------------------------------

\begin{theorem}\label{sec2,thm2}
For each $f\in\mathcal F',\,I(f)\subset\{z=x+iy : x>0, (4k-1)\frac{\pi}{2}<y<(4k+1)\frac{\pi}{2}, k\in\Z\}.$
\end{theorem}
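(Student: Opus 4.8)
The plan is to mirror the proof of Theorem~\ref{sec2,thm1}, with the roles of the left and right half-planes interchanged. Fix $f=f_{\mu,\ze}\in\mathcal F'$, so $f(z)=e^{z+\mu}+\ze$ with $\RE\mu<0$ and $\RE\ze\le-1$. First I would record the ``trapping'' fact announced in the introduction for $\mathcal F'$: if $\RE w\le0$, then $\RE(w+\mu)=\RE w+\RE\mu\le\RE\mu<0$, hence $|e^{w+\mu}|=e^{\RE(w+\mu)}<1$, so $f(w)$ lies in the disc $\{w':|w'-\ze|<1\}$ and in particular $\RE f(w)<\RE\ze+1\le0$. Thus $f$ carries the closed half-plane $\{\RE z\le0\}$ into that disc (and into the open half-plane $\{\RE z<0\}$); consequently, once the forward orbit of a point enters $\{\RE z\le0\}$ it stays in $\{w':|w'-\ze|<1\}$ thereafter and is therefore bounded, hence not in $I(f)$. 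Equivalently, every $z\in I(f)$ satisfies $\RE f^n(z)>0$ for all $n\ge0$; taking $n=0$ gives the half-plane part $\RE z>0$ of the statement.

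Next I would locate the imaginary part. Let $z\in I(f)$; since $f(z)\in I(f)$ as well, the previous step gives $\RE f(z)>0$, whence
\[
\RE\bigl(e^{z+\mu}\bigr)=e^{\RE z+\RE\mu}\cos\bigl(\IM z+\IM\mu\bigr)=\RE f(z)-\RE\ze>1>0 .
\]
In particular $\cos(\IM z+\IM\mu)>0$, so $\IM z$ lies in one of the open strips $(4k-1)\tfrac\pi2<y<(4k+1)\tfrac\pi2$, $k\in\Z$ --- these being exactly the components of the set $\{y\in\R:\cos y>0\}$, up to the fixed shift by $\IM\mu$. Together with $\RE z>0$, this is the asserted containment. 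The same identity applied to $f^{n+1}(z)=e^{f^n(z)+\mu}+\ze$ gives $\cos(\IM f^n(z)+\IM\mu)>0$ for every $n\ge0$, so in fact the entire forward orbit of an escaping point stays in this union of strips.

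I expect the trapping step to be the crux. The subtle point is not merely that $\{\RE z\le0\}$ is forward invariant, but that every orbit entering it becomes \emph{bounded}, which is what actually excludes such points from $I(f)$; this is precisely where both hypotheses are used --- $\RE\mu<0$ forces $|e^{z+\mu}|<1$ on the closed left half-plane, while $\RE\ze\le-1$ both pushes $f(\{\RE z\le0\})$ back across the imaginary axis and provides the slack $\RE(e^{z+\mu})>1$ exploited in the strip step. Everything else is routine; the one piece of bookkeeping worth making explicit is the identification of the strips $\bigl((4k-1)\tfrac\pi2,(4k+1)\tfrac\pi2\bigr)$, $k\in\Z$, with the positivity set of $\cos$, since that fixes the indexing in the conclusion.
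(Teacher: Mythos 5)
Your proof is correct, and while the overall strategy --- trap the closed left half-plane, conclude that an escaping orbit must stay in $\{\RE z>0\}$ forever, then read off the strips from $\RE f(z)>0$ --- matches the paper's, your execution of the trapping step is genuinely different and in fact tighter. The paper splits the plane into three pieces handled by separate lemmas: $\{\RE z<0,\ \RE(e^{z+\mu})<0\}$, the imaginary axis, and $\{\RE z>0,\ \RE(e^{z+\mu})<0\}$; the first of these is dealt with by supposing $|f^n(z)|>1+|\ze|$ for some $n$ and deriving a contradiction by an inductive descent. You replace this with the direct observation that $\RE w\le0$ forces $|e^{w+\mu}|<1$, so $f$ maps the closed left half-plane into the disc $\{w':|w'-\ze|<1\}$, which (because $\RE\ze\le-1$) sits back inside the open left half-plane; forward invariance and boundedness of the trapped orbit are then immediate. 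This buys you two things. First, it genuinely covers the whole closed left half-plane, whereas the paper's first lemma, as literally stated, only treats points of the left half-plane satisfying the extra condition $\RE(e^{z+\mu})<0$ (its assertion that this set ``is the entire left half plane'' is not accurate as the set is defined, so your version quietly closes a small gap). Second, your strip step rests on the strict inequality $\cos(\IM z+\IM\mu)>0$, which also disposes of the boundary case $\RE(e^{z+\mu})=0$ that the paper's third lemma omits. One honest caveat, which you flag and the paper does not carry through: the strips you actually obtain are $(4k-1)\frac{\pi}{2}-\IM\mu<y<(4k+1)\frac{\pi}{2}-\IM\mu$, so the theorem as printed is literally correct only up to this translation by $\IM\mu$ (equivalently, when $\IM\mu\in2\pi\Z$); the paper's own lemma records the shift and then drops it from the statement.
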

%--------------------------------------

The proof of Theorem \ref{sec2,thm1} is  elementary. It is divided in several lemmas.   The main objective is to show that none of the points in the right half plane belongs to $I(f), f\in\mathcal F$. Using this notion, we try to find out the points in the left half plane,  which do not belong to $I(f),\,f\in\mathcal F.$ We observe that the right half plane is invariant under each $f\in\mathcal F.$ In similar spirit, the proof of Theorem \ref{sec2,thm2} is also elementary. It is also divided in several lemmas. Here too, the main objective is to show that none of the points in the left half plane belongs to $I(g), g\in\mathcal F'$. Using this notion, we try to find out the points in the right half plane,  which do not belong to $I(g),\,g\in\mathcal F'.$ We observe that the left half plane is invariant under each $g\in\mathcal F'.$

To prove Theorem \ref{sec2,thm1}, we first prove the following lemmas:

\begin{lemma}\label{sec2,lem1}
The set $I_1=\{z\in\C : \RE z >0, \RE(e^{-z+\la})>0,\,\la\in\C,\,\text{with}\,\RE\la<0\}$ does not intersect $I(f)$ for each $f\in\mathcal F.$
\end{lemma}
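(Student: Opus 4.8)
The plan is to exploit two facts about every $f=f_{\la,\xi}\in\mathcal F$: the open right half-plane $H^+=\{z\in\C:\RE z>0\}$ is forward invariant under $f$, and $f$ is bounded on $H^+$. Both follow from one elementary estimate. If $\RE w>0$, then since $\RE\la<0$ we have $\RE(-w+\la)=-\RE w+\RE\la<0$, hence $|e^{-w+\la}|=e^{\RE(-w+\la)}<1$. Therefore $|f(w)-\xi|=|e^{-w+\la}|<1$, which yields simultaneously $|f(w)|<|\xi|+1$ and, using $\RE\xi\geq1$, the inequality $\RE f(w)=\RE(e^{-w+\la})+\RE\xi\geq -|e^{-w+\la}|+\RE\xi>-1+1=0$. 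Thus $f(H^+)\subset H^+$, and moreover $f(H^+)$ lies in the disc $\{z:|z-\xi|<1\}$.

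Next I would take an arbitrary $z\in I_1$. Since $\RE z>0$ and $\RE(e^{-z+\la})>0$, we get $\RE f(z)=\RE(e^{-z+\la})+\RE\xi>0+1>0$, so $f(z)\in H^+$. Applying the invariance inductively gives $f^n(z)\in H^+$ for all $n\geq1$, and each such iterate, being of the form $f(f^{n-1}(z))$ with $f^{n-1}(z)\in H^+$, satisfies $|f^n(z)|<|\xi|+1$. Hence the forward orbit $\{f^n(z)\}_{n\in\N}$ is bounded, so $z\notin I(f)$; as $z\in I_1$ was arbitrary, $I_1\cap I(f)=\emptyset$.

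I expect no genuine obstacle here: the entire argument rests on the observation that $\RE\la<0$ forces $|e^{-w+\la}|<1$ whenever $\RE w>0$. The only care needed is to keep the estimates strict (so that $\RE f(w)>0$ rather than merely $\geq 0$, which is exactly where the hypothesis $\RE\xi\geq1$ is used) and to notice that the reasoning in fact proves the stronger statement that \emph{no} point of $H^+$ belongs to $I(f)$ — the ``main objective'' announced in the text, which will presumably feed into the subsequent lemmas that locate $I(f)$ inside the left half-plane.
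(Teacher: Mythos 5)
Your proof is correct and rests on the same key estimate as the paper's, namely that $\RE\la<0$ together with $\RE w>0$ forces $|e^{-w+\la}|<1$, whence $|f(w)|<1+|\xi|$ and $\RE f(w)>0$. The only difference is presentational: you argue directly via forward invariance of the right half-plane and boundedness of $f$ there, whereas the paper obtains the same bound $|f^k(z)|\le 1+|\xi|$ by contradiction with a backward induction on the iterates; your direct version is arguably the cleaner formulation of the identical idea.
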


\begin{proof}
Observe that $\RE(e^{-z+\la})> 0,$ implies $\dfrac{(4k-1)\pi}{2}+\IM\la<y<\dfrac{(4k+1)\pi}{2}+\IM\la,$ where $k\in\Z.$ Therefore, the set $I_1$ is the entire right half plane $\{z:\RE z>0\}.$ We show that no point in $I_1$ escapes to $\ity$ under iteration of each $f\in\mathcal F.$ For  this we show  $|f^k(z)|\leq 1+|\xi|$ for all $k\in\N,\, z\in I_1.$  Suppose on the contrary  there exist $n\in\N$ and $z\in I_1$ such that $|f^n(z)|>1+|\xi|.$ Then $|f(f^{n-1}(z))|> 1+|\xi|,$  implies $1+|\xi|<|e^{-f^{n-1}(z)+\la}+\xi|.$ This shows that $e^{-\RE f^{n-1}(z)+\RE\la}>1$, and as $\RE\la<0,$ we obtain  $-\RE f^{n-1}(z)>0,$ that is, $\RE f^{n-1}(z)<0.$  Further this implies that $\RE(f(f^{n-2}(z)))<0,$ that is, $\RE(e^{-f^{n-2}(z)+\la}+\xi)<0,$ which implies  $-\RE(e^{-f^{n-2}(z)+\la})> \RE\xi\geq 1.$  Since $|z|\geq -\RE z $ for all $z\in\C,$ we get $|(e^{-f^{n-2}(z)+\la}|>1,$ that is,  $e^{-\RE f^{n-2}(z)+\RE\la}>1$ and so  $\RE f^{n-2}(z)<0.$ By induction  we will get $\RE f(z)<0.$ But $\RE f(z)=\RE(e^{-z+\la})+\RE\xi>{0+1}=1,$  so we arrive at  a contradiction and therefore proves the assertion.
\end{proof}
%--------------------------------------------

\begin{lemma}\label{sec2,lem2}
The set $I_2=\{z\in\C: \RE z=0\}$ does not intersect $I(f)$ for each $f\in\mathcal F.$
\end{lemma}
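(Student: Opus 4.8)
The plan is to reduce the imaginary axis to a situation already handled by Lemma \ref{sec2,lem1}. Take $z = iy$ with $y \in \R$, and let $f = f_{\la,\xi} \in \mathcal F$. I first compute $\RE f(z) = \RE(e^{-iy+\la}) + \RE\xi = e^{\RE\la}\cos(\IM\la - y) + \RE\xi$. Since $\RE\la < 0$ we have $e^{\RE\la} < 1$, hence $e^{\RE\la}\cos(\IM\la-y) \ge -e^{\RE\la} > -1$, and because $\RE\xi \ge 1$ this forces $\RE f(z) > 0$. Thus $f(z)$ lies in the open right half-plane.

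Next I invoke the observation recorded in Lemma \ref{sec2,lem1}, namely that $I_1$ is exactly the right half-plane $\{w : \RE w > 0\}$ and that no point of $I_1$ escapes under iteration of any member of $\mathcal F$ — indeed the argument there shows $|f^k(w)| \le 1 + |\xi|$ for all $k \in \N$ whenever $\RE w > 0$. Since $f(z) \in \{w : \RE w > 0\} = I_1$, we conclude $|f^{k}(z)| = |f^{k-1}(f(z))| \le 1 + |\xi|$ for all $k \ge 1$, so the forward orbit of $z$ is bounded and $z \notin I(f)$. As $f \in \mathcal F$ was arbitrary, $I_2 \cap I(f) = \emptyset$ for each $f \in \mathcal F$.

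The only step requiring any care is the sign estimate showing $f$ maps the imaginary axis into the right half-plane; this is where the hypotheses $\RE\la < 0$ and $\RE\xi \ge 1$ are used together, and it is entirely routine. Everything else is an immediate application of Lemma \ref{sec2,lem1}, so I anticipate no genuine obstacle here. (One could equally phrase the conclusion by noting that $\overline{I_1} \supseteq I_2$ is not used directly; rather it is the forward invariance $f(I_2) \subset I_1$ together with the non-escaping of $I_1$ that does the work.)
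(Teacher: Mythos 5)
Your proof is correct and follows essentially the same route as the paper: compute $\RE f(iy)=e^{\RE\la}\cos(y-\IM\la)+\RE\xi$, use $\RE\la<0$ and $\RE\xi\geq 1$ to conclude $f$ maps the imaginary axis into the open right half-plane, and then apply Lemma \ref{sec2,lem1}. Your version merely spells out the sign estimate a bit more explicitly than the paper does.
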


\begin{proof}
Any $z\in I_2$ has the form $z=iy$ for some $y\in\R.$ Now $\RE f(z)=\RE (e^{-iy+\la}+\xi)=e^{\RE\la}\cos(y-\IM\la)+\RE\xi,$ and as $\RE\xi\geq 1$ we get $\RE f(z)>0.$ From above lemma, $I_2\cap I(f)=\emptyset,$ for each $f\in\mathcal F$ and hence the result.
\end{proof}
%-----------------------------------------

\begin{lemma}\label{sec2,lem3}
The set $I_3=\{z\in\C: \RE z<0, \RE(e^{-z+\la})>0\}$ does not intersect $I(f)$ for each $f\in\mathcal F.$ 
\end{lemma}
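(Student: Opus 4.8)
The plan is to reduce this assertion at once to Lemma \ref{sec2,lem1}. The point is that a single application of $f$ carries every point of $I_3$ into the right half plane, and in Lemma \ref{sec2,lem1} it has already been established (there the whole right half plane $\{z:\RE z>0\}$ is identified with $I_1$) that the right half plane is disjoint from $I(f)$. Combining this with the elementary fact that $I(f)$ is invariant in the sense that $z\in I(f)\iff f(z)\in I(f)$ finishes the argument.

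In detail, I would fix an arbitrary $z\in I_3$ and write $f(z)=e^{-z+\la}+\xi$, so that $\RE f(z)=\RE(e^{-z+\la})+\RE\xi$. The defining condition of $I_3$ gives $\RE(e^{-z+\la})>0$, and since $\RE\xi\geq 1$ we obtain $\RE f(z)>1>0$; hence $f(z)$ lies in the right half plane, i.e.\ $f(z)\in I_1$. By Lemma \ref{sec2,lem1}, $I_1\cap I(f)=\emptyset$, so $f(z)\notin I(f)$. Since $f^{n}(z)=f^{\,n-1}(f(z))$ for $n\geq 1$, the orbit of $z$ tends to $\ity$ if and only if the orbit of $f(z)$ does, so $z\in I(f)$ would force $f(z)\in I(f)$, a contradiction. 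Therefore $z\notin I(f)$, and as $z\in I_3$ was arbitrary, $I_3\cap I(f)=\emptyset$ for every $f\in\mathcal F$.

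I do not expect any genuine obstacle here: the real work was done in Lemma \ref{sec2,lem1} (the right half plane is forward invariant under each $f\in\mathcal F$ and $|f^{n}(z)|\leq 1+|\xi|$ there, so no point of it escapes). The only step that deserves to be written down with care is the equivalence $z\in I(f)\iff f(z)\in I(f)$, which is what lets us pass from the non-escaping of the image $f(z)$ back to the non-escaping of $z$ itself. Equivalently, one may simply note $f(I_3)\subseteq\{w:\RE w>0\}$ and invoke that this half plane avoids $I(f)$.
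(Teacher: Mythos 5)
Your proposal is correct and is essentially the paper's own argument: the paper likewise observes that $f(I_3)$ lies in the right half plane and then invokes Lemma \ref{sec2,lem1}. You merely spell out the two details the paper leaves implicit, namely the computation $\RE f(z)=\RE(e^{-z+\la})+\RE\xi>1$ and the equivalence $z\in I(f)\iff f(z)\in I(f)$.
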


\begin{proof}
For each $f\in\mathcal F$ and for each $z\in I_3,$ $f(z)$ belongs to the right half plane and hence cannot escape to $\ity$ using Lemma \ref{sec2,lem1}.
\end{proof}
%------------------------------------------------
The proof of Theorem \ref{sec2,thm1} now follows from the above three lemmas.

\begin{remark}\label{sec2,rem1'}
The right half plane is invariant under each $f\in\mathcal F$.
\end{remark}

To prove Theorem \ref{sec2,thm2}, we first prove the following lemmas:

\begin{lemma}\label{sec2,lem4}
The set $I_1'=\{z\in\C: \RE z <0, \RE(e^{z+\mu})<0,\,\mu\in\C\,\text{with}\,\RE\mu<0\}$ does not intersect $I(f)$ for each $f\in\mathcal F'.$
\end{lemma}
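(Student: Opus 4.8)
The plan is to mirror exactly the argument used for Lemma~\ref{sec2,lem1}, now adapted to the family $\mathcal F'$ where the roles of left and right half planes are interchanged. First I would observe that the condition $\RE(e^{z+\mu})<0$ forces $\cos(\IM z+\IM\mu)<0$, so that $\IM z$ lies in an interval of the form $\bigl((4k+1)\tfrac{\pi}{2}-\IM\mu,\,(4k+3)\tfrac{\pi}{2}-\IM\mu\bigr)$ for some $k\in\Z$; since $\RE(e^{z+\mu})$ can be negative regardless of the sign of $\RE z$, the set $I_1'$ is in fact the entire left half plane $\{z:\RE z<0\}$. This reduction is the analogue of the first observation in the proof of Lemma~\ref{sec2,lem1}.

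Next I would show that no point of $I_1'$ escapes, by proving the uniform bound $|f^k(z)|\le 1+|\ze|$ for all $k\in\N$ and all $z\in I_1'$, where $f=f_{\mu,\ze}\in\mathcal F'$. Arguing by contradiction, suppose $|f^n(z)|>1+|\ze|$ for some $n$; then $|e^{f^{n-1}(z)+\mu}+\ze|>1+|\ze|$, which forces $|e^{f^{n-1}(z)+\mu}|>1$, hence $e^{\RE f^{n-1}(z)+\RE\mu}>1$, and since $\RE\mu<0$ this gives $\RE f^{n-1}(z)>0$. Then $\RE\bigl(e^{f^{n-2}(z)+\mu}+\ze\bigr)>0$, so $\RE\bigl(e^{f^{n-2}(z)+\mu}\bigr)>-\RE\ze\ge 1$ (using $\RE\ze\le-1$); since $|w|\ge\RE w$, this yields $|e^{f^{n-2}(z)+\mu}|>1$ and therefore $\RE f^{n-2}(z)>0$. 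Iterating this implication downward by induction, I obtain $\RE f(z)>0$. But $\RE f(z)=\RE(e^{z+\mu})+\RE\ze<0+(-1)=-1<0$ for $z\in I_1'$, a contradiction; hence the claimed bound holds and $I_1'\cap I(f)=\emptyset$.

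I would expect the only real point requiring care to be the base case of the downward induction, namely checking that the final step produces the inequality $\RE f(z)>0$ while the hypothesis $z\in I_1'$ directly contradicts it via $\RE(e^{z+\mu})<0$ and $\RE\ze\le-1$; everything else is a routine sign-chase that is completely parallel to Lemma~\ref{sec2,lem1}. One should also note in passing that the computation $\RE f(z)=e^{\RE z+\RE\mu}\cos(\IM z+\IM\mu)+\RE\ze$ shows $\RE f(z)\le e^{\RE z+\RE\mu}+\RE\ze$, and since $\RE\mu<0$, on the half plane $\RE z<0$ this is strictly less than $1+\RE\ze\le 0$, confirming that the left half plane is invariant under each $f\in\mathcal F'$ — the analogue of Remark~\ref{sec2,rem1'} — which is implicitly what makes the induction close.
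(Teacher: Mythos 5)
Your proposal is correct and follows essentially the same route as the paper: reduce to the left half plane, then prove the uniform bound $|f^k(z)|\le 1+|\ze|$ by the same downward sign-chasing contradiction, ending with $\RE f(z)<-1$ for $z\in I_1'$. Your closing observation that $\RE f(z)\le e^{\RE z+\RE\mu}+\RE\ze<1+\RE\ze\le 0$ on the whole left half plane is a worthwhile small addition, since it justifies the invariance of the left half plane (the paper's Remark~\ref{sec2,rem2'}) and covers the base case uniformly.
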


\begin{proof}
 Observe that $\RE(e^{z+\mu})< 0,$ implies $\dfrac{(4k-3)\pi}{2}-\IM\mu<y<\dfrac{(4k-1)\pi}{2}-\IM\mu,$ where $k\in\Z.$ Therefore,  the set $I_1'$ is the entire left half plane $\{z:\RE z<0\}.$ We show no point in $I_1'$ escapes to $\ity$ under iteration of each $f\in\mathcal F'.$ For  this we show  $|f^k(z)|\leq 1+|\ze|$ for all $k\in\N,\,z\in I_1'.$  Suppose on the contrary  there exist $n\in\N$ and $z\in I_1'$ such that $|f^n(z)|>1+|\ze|.$
Now $|f(f^{n-1}(z))|> 1+|\ze|,$  implies $1+|\ze|<|e^{f^{n-1}(z)+\mu}+\ze|.$ This shows that $e^{\RE f^{n-1}(z)+\RE\mu}>1$, and as $\RE\mu<0,$ we obtain  $\RE f^{n-1}(z)>0.$   Further this implies that $\RE(f(f^{n-2}(z)))>0,$ that is, $\RE(e^{f^{n-2}(z)+\mu}+\ze)>0,$ which implies $\RE(e^{f^{n-2}(z)+\mu})>-\RE\ze\geq 1.$  Since $|z|\geq \RE z$ for all $z\in\C,$ we get $|(e^{f^{n-2}(z)+\mu})|>1,$ that is,  $e^{\RE f^{n-2}(z)+\RE\mu}>1$ and so $\RE f^{n-2}(z)>0.$ By induction  we will get $\RE f(z)>0.$ But $\RE f(z)=\RE(e^{z+\mu})+\RE\ze<{0-1}=-1,$  so we arrive at  a contradiction and therefore proves the assertion.
\end{proof}
%---------------------------------------------

\begin{lemma}\label{sec2,lem5}
The set $I_2'=\{z\in\C: \RE z=0\}$ does not intersect $I(f)$ for each $f\in\mathcal F'.$
\end{lemma}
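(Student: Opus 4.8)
The plan is to follow the template set by the proof of Lemma~\ref{sec2,lem2}, adapted to the family $\mathcal F'$. First I would write an arbitrary point of $I_2'$ as $z=iy$ with $y\in\R$. For $f\in\mathcal F'$ we have $f(z)=e^{z+\mu}+\ze$, so
\[
\RE f(z)=\RE\bigl(e^{iy+\mu}+\ze\bigr)=e^{\RE\mu}\cos(y+\IM\mu)+\RE\ze .
\]
Since $\RE\mu<0$ we have $e^{\RE\mu}<1$, and since $|\cos(y+\IM\mu)|\le 1$ the first summand is strictly less than $1$; combining this with $\RE\ze\le -1$ gives $\RE f(z)<1+\RE\ze\le 0$. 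Hence $f(z)$ lies in the open left half-plane, which by the first assertion of Lemma~\ref{sec2,lem4} is exactly the set $I_1'$.

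It then remains to observe that $z\in I(f)$ if and only if $f(z)\in I(f)$ (the escaping set is completely invariant under $f$). Since $f(z)\in I_1'$ and, by Lemma~\ref{sec2,lem4}, $I_1'\cap I(f)=\emptyset$, we get $f(z)\notin I(f)$ and therefore $z\notin I(f)$. As $z\in I_2'$ was arbitrary, $I_2'\cap I(f)=\emptyset$ for every $f\in\mathcal F'$. I do not anticipate any genuine obstacle here: the whole argument rests on the elementary inequality $e^{\RE\mu}\,|\cos(\cdot)|<1\le-\RE\ze$, exactly mirroring the role of $\RE\xi\ge 1$ in Lemma~\ref{sec2,lem2}, and on the already-established Lemma~\ref{sec2,lem4}.
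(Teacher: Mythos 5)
Your proof is correct and follows essentially the same route as the paper: compute $\RE f(iy)=e^{\RE\mu}\cos(y+\IM\mu)+\RE\ze<0$, so the imaginary axis maps into the left half-plane, which Lemma~\ref{sec2,lem4} already shows contains no escaping points. If anything, you are slightly more careful than the paper in justifying the inequality (via $e^{\RE\mu}<1$) and in invoking the invariance of $I(f)$ explicitly, but the argument is the same.
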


\begin{proof}
Any $z\in I_2'$ has the form $z=iy$ for some $y\in\R.$ Now $\RE f(z)=\RE (e^{iy+\mu}+\ze)=e^{\RE\mu}\cos(y+\IM\mu)+\RE\ze,$ and as $\RE\ze\leq -1$ we get $\RE f(z)<0.$ From above lemma, $I_2'\cap I(f)=\emptyset,$ for each $f\in\mathcal F'$ and hence the result.
\end{proof}
%-----------------------------------------

\begin{lemma}\label{sec2,lem6}
The set $I_3'=\{z\in\C: \RE z>0, \RE(e^{z+\mu})<0\}$ does not intersect $I(f)$ for each $f\in\mathcal F'.$ 
\end{lemma}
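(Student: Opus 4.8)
The plan is to reduce Lemma~\ref{sec2,lem6} to Lemma~\ref{sec2,lem4} in exactly the same way that Lemma~\ref{sec2,lem3} was reduced to Lemma~\ref{sec2,lem1} in the proof of Theorem~\ref{sec2,thm1}. The key observation is that $I_3'$ consists of points $z$ with $\RE z>0$ but $\RE(e^{z+\mu})<0$; applying $f\in\mathcal F'$ once sends such a $z$ into the left half plane, and by Lemma~\ref{sec2,lem4} (together with Lemma~\ref{sec2,lem5}) nothing in the closed left half plane escapes.

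More precisely, first I would fix $f=f_{\mu,\ze}\in\mathcal F'$ and $z\in I_3'$. The real part of the image is $\RE f(z)=\RE(e^{z+\mu})+\RE\ze$. Since $z\in I_3'$ we have $\RE(e^{z+\mu})<0$, and since $f\in\mathcal F'$ we have $\RE\ze\le-1<0$; hence $\RE f(z)<0$, so $f(z)$ lies in the open left half plane $\{w:\RE w<0\}$. Next I would invoke the proof of Lemma~\ref{sec2,lem4}, which actually establishes that the \emph{entire} left half plane $I_1'=\{w:\RE w<0\}$ is disjoint from $I(f)$ for each $f\in\mathcal F'$: indeed the computation there shows $|f^k(w)|\le 1+|\ze|$ for all $k\in\N$ whenever $\RE w<0$. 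Applying this with $w=f(z)$ gives $|f^{k}(f(z))|\le 1+|\ze|$ for all $k\in\N$, i.e.\ $|f^{k+1}(z)|$ is bounded in $k$, so the orbit of $z$ is bounded and in particular $z\notin I(f)$. Since $f$ and $z\in I_3'$ were arbitrary, $I_3'\cap I(f)=\emptyset$ for each $f\in\mathcal F'$.

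There is really no serious obstacle here: the only thing to be slightly careful about is the logical direction of the statement appearing in Lemma~\ref{sec2,lem4}, namely that one should cite the fact proved inside that lemma that boundedness of the orbit holds for \emph{every} point of the open left half plane, not merely for points of $I_1'$ as a special subset --- but since $I_1'$ was shown there to coincide with the whole left half plane, this is automatic. One could alternatively phrase the argument purely as: $f(I_3')\subset\{\RE w<0\}=I_1'$, and $I_1'\cap I(f)=\emptyset$ by Lemma~\ref{sec2,lem4}, hence since $I(f)$ is completely invariant (a point escapes iff its image escapes) we get $I_3'\cap I(f)=\emptyset$. This mirrors verbatim the one-line proof of Lemma~\ref{sec2,lem3}, and completes the proof.
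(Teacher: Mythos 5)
Your proof is correct and follows exactly the paper's own argument: the paper's one-line proof likewise observes that $f(I_3')$ lands in the left half plane and then invokes Lemma~\ref{sec2,lem4}. Your version simply makes explicit the computation $\RE f(z)=\RE(e^{z+\mu})+\RE\ze<0-1<0$ and the inference from boundedness of the orbit of $f(z)$ to boundedness of the orbit of $z$, both of which the paper leaves implicit.
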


\begin{proof}
For each $f\in\mathcal F'$ and for each $z\in I_3',$ $f(z)$ belongs to the left half plane and hence cannot escape to $\ity$ using Lemma \ref{sec2,lem4}.
\end{proof}
%------------------------------------------------
The proof of Theorem \ref{sec2,thm2} now follows from the above three lemmas.

\begin{remark}\label{sec2,rem2'}
The left half plane is invariant under each $f\in\mathcal F'.$
\end{remark}

The following corollary is immediate

\begin{corollary}\label{sec2,cor1}
For each $f\in\mathcal F$ and for each $g\in\mathcal F', I(f)\cap I(g)=\emptyset.$
\end{corollary}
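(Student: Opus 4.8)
\textbf{Proof proposal for Corollary \ref{sec2,cor1}.}

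The plan is to invoke the two main theorems directly and observe that the regions they confine the escaping sets to are disjoint. By Theorem \ref{sec2,thm1}, for each $f\in\mathcal F$ we have $I(f)\subset\{z=x+iy : x<0,\ (4k-3)\frac{\pi}{2}<y<(4k-1)\frac{\pi}{2},\ k\in\Z\}$; in particular every point of $I(f)$ has negative real part. By Theorem \ref{sec2,thm2}, for each $g\in\mathcal F'$ we have $I(g)\subset\{z=x+iy : x>0,\ (4k-1)\frac{\pi}{2}<y<(4k+1)\frac{\pi}{2},\ k\in\Z\}$; in particular every point of $I(g)$ has positive real part. Since no complex number has both negative and positive real part, the two sets cannot share a point, so $I(f)\cap I(g)=\emptyset$.

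In fact one does not even need the full strength of the theorems here: it suffices to note that the right half plane $\{\RE z>0\}$ is invariant under each $f\in\mathcal F$ (Remark \ref{sec2,rem1'}) and the left half plane $\{\RE z<0\}$ is invariant under each $g\in\mathcal F'$ (Remark \ref{sec2,rem2'}), while Lemmas \ref{sec2,lem1}--\ref{sec2,lem3} and \ref{sec2,lem4}--\ref{sec2,lem6} together show that $I(f)$ avoids the closed right half plane and $I(g)$ avoids the closed left half plane; the intersection is then empty for the same real-part reason. Either route is immediate, which is presumably why the authors call the corollary ``immediate.''

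There is no real obstacle to this argument; the only point worth a moment's care is that the imaginary-part strips in the two theorems are genuinely different families of horizontal strips (the strips for $\mathcal F$ are centered at $y=(4k-1)\pi/2$, those for $\mathcal F'$ at $y=(4k+1)\pi/2=(4(k+1)-3)\pi/2$, hence interleaved), but this subtlety is irrelevant to the conclusion since the real-part condition alone already forces disjointness. Thus the corollary follows at once from Theorems \ref{sec2,thm1} and \ref{sec2,thm2}. $\qed$
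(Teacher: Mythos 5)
Your proof is correct and matches the paper's intent: the paper offers no explicit argument, calling the corollary ``immediate'' after Theorems \ref{sec2,thm1} and \ref{sec2,thm2}, and the intended reasoning is exactly your observation that $I(f)$ lies in the open left half plane while $I(g)$ lies in the open right half plane. Your aside about the interleaved strips is a nice sanity check but, as you note, unnecessary.
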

%--------------------------------------------------------

However, if the entire functions $f$ and $g$ share some relation, then their escaping sets do intersect. For instance, we have

\begin{theorem}\label{sec2,thmx'}
Let $f$ be a transcendental entire function  of period $c,$  and let $g=f^s+c,$\,  $s\in\N.$ Then $I(f)=I(g).$
\end{theorem}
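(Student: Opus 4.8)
The plan is to show the two inclusions $I(g)\subset I(f)$ and $I(f)\subset I(g)$ separately, exploiting the periodicity relation $f(z+c)=f(z)$ to compare iterates of $f$ and $g$ point by point. The key observation is that $g=f^s+c$ differs from $f^s$ only by a translation by $c$, and since $f$ has period $c$ it follows that $f\circ g=f\circ(f^s+c)=f\circ f^s=f^{s+1}$; more generally I expect a clean formula relating $g^n$ to $f^{ns}$ modulo a shift by $c$. First I would establish, by induction on $n$, the identity $g^n(z)=f^{ns}(z)+c$ for all $n\in\N$: the base case $n=1$ is the definition, and for the inductive step one writes $g^{n+1}(z)=g(g^n(z))=f^s(g^n(z))+c=f^s(f^{ns}(z)+c)+c$, and now the $c$-periodicity of $f$ (hence of $f^s$) gives $f^s(f^{ns}(z)+c)=f^s(f^{ns}(z))=f^{(n+1)s}(z)$, so $g^{n+1}(z)=f^{(n+1)s}(z)+c$.

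Given this identity, the inclusion $I(g)\subset I(f)$ is essentially immediate: if $g^n(z)\to\ity$ then $f^{ns}(z)=g^n(z)-c\to\ity$, so the subsequence $\{f^{ns}(z)\}_n$ of the orbit of $z$ under $f$ tends to infinity; to conclude $z\in I(f)$ one should argue that the full orbit escapes. Here I would use continuity of $f$ together with the fact that $f$ applied to a set escaping to infinity... actually, the cleanest route is: since $g^n(z)\to\infty$ and $g$ is continuous, for the intermediate iterates $f^{ns+j}(z)=f^j(g^n(z))$ with $0\le j<s$, one notes $f^j(g^n(z))=f^j(f^{ns}(z))$, and escaping of $\{f^{ns}(z)\}$ plus the general fact that if a subsequence $f^{n_k}(z)\to\infty$ with bounded gaps then (for transcendental entire $f$, which tends to infinity ``on average'') the whole sequence escapes — more simply, one invokes that $I(f)$ is completely invariant under $f$, so $f^{ns}(z)\in I(f)$ forces $z\in I(f)$ by applying the backward part of invariance finitely many times. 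For the reverse inclusion $I(f)\subset I(g)$: if $f^n(z)\to\ity$ then in particular $f^{ns}(z)\to\ity$ along the subsequence, hence $g^n(z)=f^{ns}(z)+c\to\ity$, so $z\in I(g)$.

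The main obstacle is the subtlety in passing between ``a subsequence of the orbit escapes'' and ``the orbit escapes'', i.e.\ showing $z\in I(f)\iff f^s(z)\in I(f)$ for the finitely many intermediate steps. This is resolved by the complete invariance of the escaping set: $I(f)=f^{-1}(I(f))$ for any transcendental entire $f$ (a standard fact, following from normality considerations or directly from the definition together with the observation that $f$ maps a neighborhood of infinity into a neighborhood of infinity on its orbit), which gives $f^{ns}(z)\in I(f)\Rightarrow f^{ns-1}(z)\in I(f)\Rightarrow\cdots\Rightarrow z\in I(f)$. I would state this invariance as a preliminary remark and then the two inclusions follow formally from the identity $g^n(z)=f^{ns}(z)+c$ established above, completing the proof that $I(f)=I(g)$.
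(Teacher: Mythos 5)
Your main identity $g^n(z)=f^{ns}(z)+c$, proved by induction from the $c$-periodicity of $f$ (and hence of $f^s$), is exactly the content of the paper's one-line proof, and the inclusion $I(f)\subset I(g)$ follows from it correctly as you state. The genuine gap is in the other inclusion, in the passage from ``$f^{ns}(z)\to\ity$ as $n\to\ity$'' to ``$z\in I(f)$''. Your appeal to complete invariance is circular: the implication $f^{N}(z)\in I(f)\Rightarrow z\in I(f)$ is sound, but what you actually have is that the \emph{sequence} $\{f^{ns}(z)\}_{n}$ tends to $\ity$, i.e.\ $z\in I(f^s)$, not that any particular orbit point lies in $I(f)$; asserting $f^{Ns}(z)\in I(f)$ for some $N$ is equivalent to the desired conclusion. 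Likewise the remark that a transcendental entire function ``tends to infinity on average'', so that escape along a subsequence with bounded gaps forces escape of the whole orbit, is not an argument: entire functions can send sequences tending to $\ity$ to bounded sequences (e.g.\ $e^z$ on $\{-n\}$), so you cannot push the escaping subsequence forward to control the intermediate iterates.

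The correct repair is short and runs in the opposite direction: an entire function maps bounded sets to bounded sets, so if $h$ is entire and $h(w_n)\to\ity$ then $w_n\to\ity$. Fix $j\in\{1,\dots,s-1\}$ and set $w_n=f^{ns+j}(z)$; then $f^{s-j}(w_n)=f^{(n+1)s}(z)\to\ity$, whence $w_n\to\ity$. Since this holds for each residue $j$, and for $j=0$ by hypothesis, the full orbit satisfies $f^m(z)\to\ity$, i.e.\ $I(f^s)\subset I(f)$. (This is the same device the paper uses in the proof of Theorem \ref{sec2,thmx''}(i), deducing escape of a sequence from escape of its image under an entire map.) With this lemma in place your two inclusions, and hence $I(f)=I(g)$, go through; the paper's own proof suppresses this point entirely, so apart from the flawed justification your route coincides with its intended one.
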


\begin{proof}
For $n\in\N,\,g^n=f^{ns}+c$ and so  $I(g)=I(f).$
\end{proof}

We illustrate this with an example.

\begin{example}\label{sec2,egi}
Let $f=e^{\la z},\,\la\in\C\setminus\{0\}$ and $g=f^s+\frac{2\pi i}{\la},\,s\in\N.$  For $n\in\N,\,g^n=f^{ns}+\frac{2\pi i}{\la}$ and so  $I(g)=I(f).$  
\end{example}

\section{Composite entire functions and their dynamics}\label{sec3}

In this section, we  prove some results related to escaping sets of composite entire functions.
Recall that if a transcendental entire function $f$ is of bounded type, then $I(f)\subset J(f)$ and $J(f)=\overline{I(f)}$ \cite{EL}.

\begin{theorem}\label{sec2,thm3}
If $f$ and $g$ are permutable transcendental entire functions of bounded type, then $\overline{I(f)}$ and $\overline{I(g)}$ are completely invariant under $f\circ g.$
\end{theorem}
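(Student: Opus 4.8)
The plan is to pass from escaping sets to Julia sets. Since $f$ and $g$ are of bounded type, so is $f\circ g$, and $\overline{I(f)}=J(f)$, $\overline{I(g)}=J(g)$; hence it suffices to show that $J(f)$ and $J(g)$ are each completely invariant under $f\circ g$. Now $J(f)$ is already completely invariant under $f$ and $J(g)$ under $g$, and a set $A$ that is completely invariant under \emph{both} $f$ and $g$ is automatically completely invariant under $f\circ g=g\circ f$ (since then $z\in A\iff g(z)\in A\iff f(g(z))\in A$). So the whole statement reduces to the two ``crossed'' invariances: $J(f)$ is completely invariant under $g$, and $J(g)$ under $f$. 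The hypotheses are symmetric in $f$ and $g$, so it is enough to treat the first.

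For the backward inclusion $g^{-1}(J(f))\subseteq J(f)$, I would first check that $g^{-1}(I(f))\subseteq I(f)$: if $g(z)\in I(f)$ then $f^n(g(z))=g(f^n(z))\to\infty$, using $g\circ f^n=f^n\circ g$ (which follows from $f\circ g=g\circ f$ by induction); since the entire function $g$ maps bounded sets to bounded sets, $g(f^n(z))\to\infty$ forces $f^n(z)\to\infty$, i.e.\ $z\in I(f)$. Taking closures, and using that the non-constant map $g$ is open, so that a sequence in $I(f)$ tending to $g(z)$ lifts to a sequence in $g^{-1}(I(f))$ tending to $z$, gives $g^{-1}(\overline{I(f)})\subseteq\overline{I(f)}$, that is, $g^{-1}(J(f))\subseteq J(f)$. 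It is precisely here that the bounded-type hypothesis enters, to identify $\overline{I(f)}$ with $J(f)$.

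The hard part will be the forward inclusion $g(J(f))\subseteq J(f)$, equivalently $g^{-1}(F(f))\subseteq F(f)$. I would argue by contradiction: suppose $z_0\in J(f)$ while $g(z_0)\in F(f)$, and choose a neighbourhood $U$ of $z_0$ small enough that $g(U)\subseteq F(f)$ (possible as $F(f)$ is open), noting that $g(U)$ is open. Since $U$ meets $J(f)$, the blowing-up property of the Julia set of the transcendental entire map $f$ gives $\bigcup_{n\ge 0}f^n(U)\supseteq\C\setminus E$ for a set $E$ with at most one point, so, using $g\circ f^n=f^n\circ g$,
\[
\bigcup_{n\ge 0}f^n(g(U))=g\Big(\bigcup_{n\ge 0}f^n(U)\Big)\supseteq g(\C\setminus E),
\]
and by Picard's theorem the transcendental entire function $g$ omits at most one value on $\C\setminus E$. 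On the other hand $g(U)\subseteq F(f)$ and $F(f)$ is forward invariant under $f$, so $\bigcup_{n\ge 0}f^n(g(U))\subseteq F(f)$, which misses the infinite set $J(f)$ — a contradiction. Hence $g(J(f))\subseteq J(f)$, and together with the previous paragraph $J(f)=\overline{I(f)}$ is completely invariant under $g$ as well as under $f$, hence under $f\circ g$. Interchanging $f$ and $g$ throughout gives the same conclusion for $J(g)=\overline{I(g)}$, which finishes the proof. I expect this forward-invariance step to be the only genuinely delicate point; everything else is either a short computation or a standard fact (see \cite{bergweiler}).
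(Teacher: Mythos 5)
Your proof is correct and follows essentially the same decomposition as the paper: both reduce the statement to showing that $J(f)=\overline{I(f)}$ (resp.\ $J(g)=\overline{I(g)}$) is completely invariant under both $f$ and $g$, via the two crossed inclusions $g(J(f))\subset J(f)$ and $g^{-1}(\overline{I(f)})\subset\overline{I(f)}$. The only difference is that the paper simply cites Baker \cite{baker2} and Bergweiler--Hinkkanen \cite{berg1} for these two inclusions, whereas you supply correct self-contained arguments for them (the blowing-up property plus Picard for the forward inclusion, and permutability plus the open mapping theorem for the backward one).
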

\begin{proof}
From \cite{baker2}, we have $g(J(f))\subset J(f)$ and so $g(\overline{I(f)})\subset\overline{I(f)}.$ From \cite{berg1},\, $g^{-1}(\overline{I(f)})\subset\overline{I(f)}.$ Hence $\overline{I(f)}$ is completely invariant under $g.$ On similar lines, $\overline{I(g)}$ is completely invariant under $f.$ As $J(f)=\overline{I(f)}$ is completely invariant under $f$ and $J(g)=\overline{I(g)}$ is completely invariant under $g,$ we have $\overline{I(f)}$ and $\overline{I(g)}$ are both completely invariant under $f$ and $g$ respectively and this completes the proof of  the theorem.
\end{proof}
%-----------------------------------------------
We next prove an important lemma which will be used heavily in the results to follow.
\begin{lemma}\label{sec2,lemx'}
Let $f$ and $g$ be   transcendental  entire functions satisfying $f\circ g=g\circ f.$ Then $F(f\circ g)\subset F(f)\cap F(g).$
\end{lemma}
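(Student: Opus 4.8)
The plan is to use the standard relationship between the Fatou set of an iterate (or a composition) and normality, together with the commutation hypothesis $f\circ g = g\circ f$. The key observation is that for permutable entire functions, $(f\circ g)^n = f^n\circ g^n$; this is easily verified by induction, using the commutation relation to shuffle the factors. Once this identity is available, the problem reduces to a standard normal-families argument.

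First I would prove the auxiliary identity $(f\circ g)^n = f^n \circ g^n$ for all $n\in\N$ by induction on $n$, where the inductive step writes $(f\circ g)^{n+1} = (f\circ g)\circ (f\circ g)^n = f\circ g\circ f^n\circ g^n$ and then uses $g\circ f = f\circ g$ repeatedly to move $g$ past the block $f^n$, obtaining $f\circ f^n\circ g\circ g^n = f^{n+1}\circ g^{n+1}$. Next, let $z_0\in F(f\circ g)$; then the family $\{(f\circ g)^n\}_{n\in\N} = \{f^n\circ g^n\}_{n\in\N}$ is normal on some neighborhood $U$ of $z_0$. To show $z_0\in F(g)$, I would show $\{g^n\}$ is normal on $U$: this is immediate because $\{f^n\circ g^n\}$ normal need not directly give $\{g^n\}$ normal, so instead I use the alternative route below.

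The cleaner approach: since $f\circ g = g\circ f$, we also have $(f\circ g)^n = g^n\circ f^n$ by the same induction. Now recall the general fact that if $h = \phi\circ\psi$ with $\phi,\psi$ entire, then $F(\psi\circ\phi)$ and $F(\phi\circ\psi)$ are related, but more to the point: if $\{(f\circ g)^n\}$ is normal on $U$, I want normality of $\{f^n\}$ and $\{g^n\}$ on $U$. For $\{g^n\}$: write $(f\circ g)^n = f^n\circ g^n$; one shows that a point where $\{g^n\}$ fails to be normal forces $\{f^n\circ g^n\}$ to fail to be normal, but this requires care since $f^n$ could "collapse" a non-normal family. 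The standard device to avoid this is to invoke the result (see Bergweiler's survey and Bergweiler–Hinkkanen) that $F(f\circ g) = F(g\circ f)$ up to the correspondence $z\mapsto g(z)$; concretely, $g(F(f\circ g))\subset F(g\circ f)$ and since $(f\circ g)^n = f^n\circ g^n$, normality of $\{(f\circ g)^n\}$ near $z_0$ gives, after composing, normality considerations that place $z_0$ in $F(f)\cap F(g)$.

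I expect the main obstacle to be exactly this subtlety: passing from normality of $\{f^n\circ g^n\}$ to normality of each of $\{f^n\}$ and $\{g^n\}$ separately, because composing a family with another map can destroy or create normality. The way I would handle it is: to get $z_0\in F(g)$, observe $(f\circ g)^n = g^n \circ f^n$ is false in general ordering—rather use $(f\circ g)^n=f^n\circ g^n$ and note that $\{f^n\circ g^n\}$ normal near $z_0$ together with the fact that $f$ is transcendental entire (hence $f^n$ is non-constant) lets one argue that if $\{g^n\}$ were not normal at $z_0$ then, by the characterization of non-normality via the failure of equicontinuity, $\{f^n\circ g^n\}$ would also fail to be equicontinuous there. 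To get $z_0\in F(f)$, I would use the permutability again: $g$ maps $F(f\circ g)$ into $F(f\circ g)$ appropriately, and $(f\circ g)^n = g^n\circ f^n$ would follow if we instead start the induction from the other side, giving directly that $\{f^n\}$ restricted to a neighborhood is normal. Thus the final statement $F(f\circ g)\subset F(f)\cap F(g)$ follows, with the permutability hypothesis being used twice, once for each of the two inclusions.
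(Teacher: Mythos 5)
There is a genuine gap at the exact point you yourself flag. Your identity $(f\circ g)^n=f^n\circ g^n$ is correct for permutable maps, but the step from ``$\{f^n\circ g^n\}$ is normal near $z_0$'' to ``$\{g^n\}$ (or $\{f^n\}$) is normal near $z_0$'' is never actually proved: you assert that if $\{g^n\}$ failed to be equicontinuous at $z_0$ then $\{f^n\circ g^n\}$ would too, but post-composition with the varying maps $f^n$ can contract precisely the regions where $g^n$ spreads points apart, so there is no general implication of this kind. The ``cleaner approach'' paragraph likewise ends with ``normality considerations that place $z_0$ in $F(f)\cap F(g)$'' without supplying the argument, and the claim that $(f\circ g)^n=g^n\circ f^n$ ``gives directly'' normality of $\{f^n\}$ has the same unproved jump. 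So the proposal identifies the right obstacle but does not overcome it.

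The paper closes this gap by a different mechanism, which you should adopt: by the Bergweiler--Wang theorem, $z\in F(f\circ g)$ if and only if $f(z)\in F(g\circ f)$, and since $f\circ g=g\circ f$ this says $F(f\circ g)$ is completely invariant (in particular forward invariant) under $f$, and by symmetry under $g$. Hence for a neighborhood $U\subset F(f\circ g)$ of $z_0$ one has $f^n(U)\subset F(f\circ g)$ for every $n$, so the family $\{f^n\}$ on $U$ omits the infinite set $J(f\circ g)$ and is normal by Montel's criterion; thus $z_0\in F(f)$, and the same argument with $g$ gives $z_0\in F(g)$. The point is that one does not try to transfer normality through the composition $f^n\circ g^n$ at all; one instead controls where the individual orbits $f^n(U)$ and $g^n(U)$ land and lets Montel do the work. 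If you want to keep your own framework, you would need to replace the equicontinuity claim with this invariance-plus-Montel argument (or an equivalent one), since as written that claim is the whole content of the lemma.
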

\begin{proof}
In \cite{berg2}, it was shown that $z\in F(f\circ g)$ if and only if $f(z)\in F(g\circ f).$ Since $f\circ g=g\circ f,\, F(f\circ g)$ is completely invariant under $f$ and by symmetry, under $g$ respectively and so, in particular, it is forward invariant under them. So $f(F(f\circ g))\subset F(f\circ g)$ and $g(F(f\circ g))\subset F(f\circ g),$  which by Montel's Normality Criterion implies $F(f\circ g)\subset F(f)$ and $F(f\circ g)\subset F(g)$ and hence the result. 
\end{proof}

\begin{theorem}\label{sec2,thmx'}
Let $f$ and $g$ be   transcendental  entire functions of bounded type satisfying $f\circ g=g\circ f.$ Then $\overline{I(f)}\cup\overline{I(g)}\subset \overline{I(f\circ g)}.$
\end{theorem}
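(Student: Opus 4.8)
The plan is to exploit the bounded-type hypothesis, which by the Eremenko--Lyubich result quoted at the start of Section \ref{sec3} gives $J(h)=\overline{I(h)}$ for any bounded-type transcendental entire $h$; moreover $f\circ g$ is again of bounded type by the composition formula $\mathrm{Sing}((f\circ g)^{-1})\subset\mathrm{Sing}(f^{-1})\cup f(\mathrm{Sing}(g^{-1}))$ recalled in the introduction, so $J(f\circ g)=\overline{I(f\circ g)}$ as well. Thus the desired inclusion is equivalent to $J(f)\cup J(g)\subset J(f\circ g)$, and it suffices to prove $J(f)\subset J(f\circ g)$, the inclusion for $g$ following by the symmetry $f\circ g=g\circ f$.

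To prove $J(f)\subset J(f\circ g)$ I would argue by contraposition using Lemma \ref{sec2,lemx'}. Suppose $z\notin J(f\circ g)$, i.e. $z\in F(f\circ g)$. By Lemma \ref{sec2,lemx'}, $F(f\circ g)\subset F(f)\cap F(g)$, so in particular $z\in F(f)$, i.e. $z\notin J(f)$. This shows $J(f)\subset J(f\circ g)$, and the same reasoning with the roles of $f$ and $g$ interchanged (legitimate since $f\circ g=g\circ f$, so $F(f\circ g)=F(g\circ f)\subset F(g)\cap F(f)$ as well) yields $J(g)\subset J(f\circ g)$. Combining, $J(f)\cup J(g)\subset J(f\circ g)$.

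Finally I would translate back through the bounded-type identities: since $f$, $g$, and $f\circ g$ are all of bounded type, $\overline{I(f)}=J(f)$, $\overline{I(g)}=J(g)$, and $\overline{I(f\circ g)}=J(f\circ g)$, whence
\[
\overline{I(f)}\cup\overline{I(g)}=J(f)\cup J(g)\subset J(f\circ g)=\overline{I(f\circ g)},
\]
which is the claim. The only genuine input beyond bookkeeping is Lemma \ref{sec2,lemx'}, already established; the potential subtlety — and the one point I would be careful to state explicitly — is verifying that $f\circ g$ is itself of bounded type so that the identity $J(f\circ g)=\overline{I(f\circ g)}$ is available, but this is exactly the containment of singular values noted in the introduction. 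So I expect no real obstacle here; the proof is a short chain of inclusions.
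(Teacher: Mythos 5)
Your proposal is correct and follows essentially the same route as the paper: both arguments reduce the claim to Fatou/Julia sets via the bounded-type identity $J(h)=\overline{I(h)}$ (noting that $f\circ g$ is again of bounded type) and then apply Lemma \ref{sec2,lemx'}, $F(f\circ g)\subset F(f)\cap F(g)$. The only difference is cosmetic: the paper phrases the contrapositive with an explicit neighborhood $U$ of a point outside $\overline{I(f\circ g)}$, while you state the equivalent inclusion $J(f)\cup J(g)\subset J(f\circ g)$ directly.
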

\begin{proof}
Let $z_0\notin\overline{I(f\circ g)}.$ Then there exist a neighborhood $U$ of $z_0$ such that $U\cap I(f\circ g)=\emptyset.$ As $f\circ g$ is of bounded type, we get $U\subset F(f\circ g).$ From Lemma \ref{sec2,lemx'}, $U\subset F(f)$ and $U\subset F(g)$. Therefore, $U\cap I(f)=\emptyset$ and $U\cap I(g)=\emptyset$. Thus $z_0\notin \overline{I(f)}\cup\overline{I(g)}$ and this proves the result.
\end{proof}

%\begin{remark}\label{sec2,remx'}
%With hypothesis as in the theorem, one obtains $I(f)\cup I(g)\subset I(f\circ g).$
%\end{remark}

\begin{theorem}\label{sec2,thmx''}
Let $f$ and $g$ be   transcendental  entire functions satisfying $f\circ g=g\circ f.$ Then
\begin{enumerate}
\item [(i)] $I(f\circ g)$ is completely invariant under $f$ and $g$ respectively;
\item [(ii)] $I(f\circ g)\subset I(f)\cup I(g);$
\item [(iii)] For any two positive integers $i$ and $j,$ $I(f^i\circ g^j)=I(f\circ g).$
\end{enumerate}
\end{theorem}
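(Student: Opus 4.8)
Write $H=f\circ g$; since $f$ and $g$ commute we also have $H=g\circ f$, $H^n=f^n\circ g^n=g^n\circ f^n$ for every $n\in\N$, and $H$ commutes with each of $f$ and $g$. Throughout I will use the elementary \emph{reflection principle}: if $\varphi\colon\C\to\C$ is continuous and $\varphi(w_k)\to\infty$, then $w_k\to\infty$ (otherwise a bounded subsequence of $(w_k)$ would have image inside the compact set $\varphi(\overline{D})$ for a suitable disc $D$, contradicting $\varphi(w_k)\to\infty$).

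For (i), the plan is to prove forward invariance under $f$ first. Given $z\in I(H)$, write $H^{n+1}(z)=(g\circ f)(H^n(z))=g\bigl(f(H^n(z))\bigr)$; the left-hand side tends to $\infty$, so the reflection principle applied to $g$ gives $f(H^n(z))\to\infty$. Since $f$ commutes with $H$, $f(H^n(z))=H^n(f(z))$, so $H^n(f(z))\to\infty$, i.e.\ $f(z)\in I(H)$. For backward invariance, if $f(z)\in I(H)$ then $f(H^n(z))=H^n(f(z))\to\infty$, and the reflection principle applied to $f$ yields $H^n(z)\to\infty$, i.e.\ $z\in I(H)$. Writing instead $H^{n+1}(z)=f\bigl(g(H^n(z))\bigr)$ and repeating the argument gives complete invariance under $g$.

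For (ii), I would start from $H^n(z)=f^n\bigl(g^n(z)\bigr)$. Let $z\in I(H)$. If $g^n(z)\to\infty$ then $z\in I(g)$ and we are finished; so assume $g^n(z)\not\to\infty$, and pass to a subsequence $n_k$ along which $g^{n_k}(z)$ stays in a fixed compact set and converges to some $w$, while $f^{n_k}\bigl(g^{n_k}(z)\bigr)=H^{n_k}(z)\to\infty$. I would feed in (i): the whole $g$-orbit of $z$ lies in $I(H)$, so for each fixed $m$, $H^{n_k}(g^m(z))=f^{n_k}\bigl(g^m(g^{n_k}(z))\bigr)\to\infty$ while $g^m\bigl(g^{n_k}(z)\bigr)\to g^m(w)$; this says the iterates $f^{n_k}$ blow up along a whole family of sequences accumulating at the orbit of $w$. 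Combining this with a normality analysis at $w$ (if $w\in F(f)$, a locally uniform limit of $(f^{n_k})$ near $w$ must be $\equiv\infty$; if $w\in J(f)$, one works instead with the distortion along the orbit) should force $f^n(z)\to\infty$, i.e.\ $z\in I(f)$. The hard part will be exactly this last implication --- upgrading ``$f^{n_k}$ blows up near $w$'' to ``$z$ itself escapes under $f$'' --- and this is the step I would expect to spend the most care on.

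For (iii), I would first record the standard fact $I(\varphi)=I(\varphi^k)$ for transcendental entire $\varphi$ and $k\in\N$: if $\varphi^{kn}(z)\to\infty$, then for each $r\in\{0,\dots,k-1\}$ we have $\varphi^{\,k-r}\bigl(\varphi^{kn+r}(z)\bigr)=\varphi^{k(n+1)}(z)\to\infty$, so the reflection principle gives $\varphi^{kn+r}(z)\to\infty$ and hence $\varphi^m(z)\to\infty$. Taking $\varphi=f\circ g$ yields $I(f^i\circ g^i)=I\bigl((f\circ g)^i\bigr)=I(f\circ g)$, which settles $i=j$. For $i>j$ (the case $i<j$ being symmetric) I would use the factorizations $f^i\circ g^j=f^{\,i-j}\circ(f\circ g)^j$ and $f^j\circ g^i=g^{\,i-j}\circ(f\circ g)^j$, together with the identity $(f^i\circ g^j)\circ(f^j\circ g^i)=(f\circ g)^{i+j}$; applying (i), (ii) and the reflection principle to these compositions of commuting maps --- whose factors have escaping sets among $I(f)$, $I(g)$ and $I\bigl((f\circ g)^j\bigr)=I(f\circ g)$ --- then sandwiches $I(f^i\circ g^j)$ down to $I(f\circ g)$. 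The only genuinely new work here beyond (i)--(ii) is matching up the exponents; everything else is again the reflection principle plus commutation.
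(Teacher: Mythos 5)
Part (i) of your proposal is correct and is essentially the paper's own argument: your ``reflection principle'' is exactly the step the paper phrases as ``as $f$ is an entire function, this implies $(g\circ f)^{n-1}g(z)\to\infty$'', and your identity $f(H^n(z))=H^n(f(z))$ plays the same role as the paper's equivalence $z\in I(f\circ g)$ if and only if $g(z)\in I(g\circ f)$. Your proof of $I(\varphi)=I(\varphi^k)$, which settles the case $i=j$ of (iii), is also fine.

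The genuine gaps are in (ii) and in the case $i\neq j$ of (iii). For (ii) you attack the statement directly: from $z\in I(f\circ g)$ with $g^{n_k}(z)\to w$ you hope to force $z\in I(f)$ by a normality or distortion analysis at $w$, and you explicitly leave that implication unproved. That implication is the entire content of (ii): knowing that $f^{n_k}\to\infty$ along sequences accumulating at $w$ (or even locally uniformly near $w$, if $w\in F(f)$) says nothing a priori about the behaviour of $f^n$ at the point $z$ itself, which need not be anywhere near $w$, and the sketched case split does not close this. The paper avoids the difficulty by proving the contrapositive: if $z_0\notin I(f)\cup I(g)$ then the orbits $f^n(z_0)$ and $g^n(z_0)$ do not escape, whence $(f\circ g)^n(z_0)=f^n(g^n(z_0))$ does not escape --- a short argument (itself stated tersely in the paper) that never needs the step you flag as hard. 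For (iii) with $i>j$, the ``sandwich'' is only asserted. The inclusion supplied by (ii), namely $I(A\circ B)\subset I(A)\cup I(B)$ with $A=f^i\circ g^j$ and $B=f^j\circ g^i$, reads $I(f\circ g)\subset I(f^i\circ g^j)\cup I(f^j\circ g^i)$: it bounds a union from below and gives no upper bound on $I(f^i\circ g^j)$ at all, so it cannot squeeze that set down to $I(f\circ g)$. The paper again works with non-escaping orbits, arguing that boundedness of $(f\circ g)^n(w)=f^n(g^n(w))$ yields boundedness of $(f^i\circ g^j)^n(w)=f^{in}(g^{jn}(w))$ via a diagonal-type argument; an argument of that kind (in both directions) is what your exponent bookkeeping would need to be replaced by.
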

\begin{proof}
\begin{enumerate}
\item [(i)] We first show that $z\in I(f\circ g)$ if and only if $g(z)\in I(g\circ f).$ Let $z\in I(f\circ g).$ Then $(f\circ g)^n(z)\to\ity$ as $n\to\ity,$ that is,
$f((g\circ f)^{n-1}g(z))\to\ity$ as $n\to\ity.$ As $f$ is an entire function, this implies that $(g\circ f)^{n-1}g(z)\to\ity$ as $n\to\ity,$ that is, $g(z)\in I(g\circ f).$ On the other hand, let $g(z)\in I(g\circ f).$ Then $(g\circ f)^{n}(g(z))\to\ity$ as $n\to\ity,$ that is, $g((f\circ g)^n(z))\to\ity$ as $n\to\ity.$ Again, as $g$ is entire, this forces $(f\circ g)^n(z)\to\ity$ as $n\to\ity.$  So, $z\in I(f\circ g)$ which proves the claim. As $f\circ g=g\circ f,$ we obtain $z\in I(f\circ g)$ if and only if $g(z)\in I(f\circ g)$ which implies $I(f\circ g)$ is completely invariant under $g,$ and by symmetry, under $f$ respectively.

\item [(ii)] Suppose $z_0\notin I(f)\cup I(g)$. 
Then both $f^n(z_0)$ and $g^n(z_0)$ are bounded as $n\to\ity,$ which in turn  implies $(f\circ g)^n(z_0)$ is bounded as $n\to\ity$  and hence the result.

\item [(iii)] For $i, j\in\N,$ assume $i\geq j.$ We first show that $I(f^i\circ g^j)\subset I(f\circ g).$ To this end, let $w\notin I(f\circ g).$ Then $(f\circ g)^n(w)$ is bounded as $n\to\ity,$ which in turn (using a diagonal sequence argument) implies that $(f^i\circ g^j)^n(w)$ is bounded as  $n\to\ity.$ On similar lines, we get $I(f\circ g)\subset I(f^i\circ g^j)$ and hence $I(f^i\circ g^j)=I(f\circ g)$ for all $i, j\in\N.$\qedhere  
\end{enumerate}
\end{proof}

\begin{theorem}\label{sec2,thmx'''}
Let $f$ and $g$ be   transcendental  entire functions satisfying $f\circ g=g\circ f.$ Then $g(I(f))\supset I(f).$
\end{theorem}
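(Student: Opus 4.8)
The plan is to prove the equivalent inclusion $I(f)\subseteq g(I(f))$, the mechanism being that every $g$-preimage of an escaping point of $f$ is again an escaping point of $f$. Throughout I would use the immediate consequence of $f\circ g=g\circ f$ that $f^n\circ g=g\circ f^n$ for all $n\in\N$, proved by induction on $n$.

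The key step is the inclusion $g^{-1}(I(f))\subseteq I(f)$, which I would prove as follows. Let $z$ be such that $g(z)\in I(f)$, so $f^n(g(z))\to\ity$. If $z\notin I(f)$, then $(f^n(z))_n$ fails to tend to infinity, so there exist $R>0$ and a subsequence $(n_k)$ with $|f^{n_k}(z)|\le R$ for all $k$; since $g$ is continuous it is bounded, by some $M$, on the compact disc $\{|\ze|\le R\}$, and hence $|f^{n_k}(g(z))|=|g(f^{n_k}(z))|\le M$ for all $k$, contradicting $f^n(g(z))\to\ity$. Therefore $z\in I(f)$. (This is essentially the invariance argument already used for $I(f\circ g)$ in Theorem \ref{sec2,thmx''}(i).)

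To finish, given $w\in I(f)$ I would use that a transcendental entire $g$ is surjective to pick $z$ with $g(z)=w$; the key step gives $z\in I(f)$, so $w=g(z)\in g(I(f))$. Since $w$ was arbitrary this yields $I(f)\subseteq g(I(f))$.

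The one point that needs care --- and the only genuine obstacle --- is the surjectivity of $g$: by Picard's theorem $g$ can omit one finite value $v$, and if $v\in I(f)$ then $v\notin g(\C)\supseteq g(I(f))$, so the claimed inclusion fails at $v$ (e.g.\ for $f=g=e^z$ one has $v=0\in I(e^z)$). Thus the statement should be read with the standing convention that the maps are surjective, or with the extra hypothesis that no omitted value of $g$ lies in $I(f)$; what the argument above establishes unconditionally is $I(f)\cap g(\C)\subseteq g(I(f))$.
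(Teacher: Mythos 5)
Your key step is exactly the paper's argument, read contrapositively: the paper's proof is the single line ``if $w\notin I(f)$ then $f^n(w)$ is bounded, so $g(f^n(w))=f^n(g(w))$ is bounded, so $g(w)\notin I(f)$,'' which is precisely your claim $g^{-1}(I(f))\subseteq I(f)$. Your version is in fact slightly more careful: $w\notin I(f)$ only gives a bounded \emph{subsequence} of the orbit, not a bounded orbit, and your subsequence-plus-compactness argument repairs that small imprecision in the paper. The important point, however, is the caveat you raise at the end: it is not a pedantic side remark but a genuine flaw in the theorem as stated. What the commutation argument proves is $I(f)\cap g(\C)\subseteq g(I(f))$; to upgrade this to $I(f)\subseteq g(I(f))$ one must hit every point of $I(f)$ by $g$, and Picard allows one omitted value. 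Your counterexample is valid: for $f=g=e^z$ the maps commute, $0\in I(e^z)$ because its orbit $0,1,e,e^e,\dots$ increases to infinity, yet $0\notin e^{\C}\supseteq g(I(f))$, so $g(I(f))\not\supseteq I(f)$. The paper's proof silently assumes surjectivity of $g$ at the points of $I(f)$ and therefore does not establish the theorem in the generality claimed; the correct unconditional statement is the one you isolate, $g^{-1}(I(f))\subseteq I(f)$ (equivalently $g(\C\setminus I(f))\subseteq\C\setminus I(f)$), or the stated inclusion under the extra hypothesis that no omitted value of $g$ escapes under $f$.
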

\begin{proof}
Let $w\notin I(f).$ Then $f^n(w)$ is bounded and so $g(f^n(w))$ is bounded, which implies $g(w)\notin I(f)$ which proves the result.
\end{proof}

We now provide an important criterion for the equality of escaping sets for two entire functions.
\begin{theorem}\label{sec2,thmx''''}
Let $f$ and $g$ be two  transcendental  entire functions of bounded type satisfying $f\circ g=g\circ f.$ Assume for each $w\in \overline{I(g)}$ and for a sequence $\{w_n\}\subset I(g)$ converging to $w,\;\lim_{n\to\ity}\lim_{k\to\ity}g^k(w_n)=\lim_{k\to\ity}\lim_{n\to\ity}g^k(w_n).$ Then $I(f)=I(g).$
\end{theorem}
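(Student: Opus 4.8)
The plan is to first unwrap the hypothesis. Fix $w\in\overline{I(g)}$ and a sequence $w_n\in I(g)$ with $w_n\to w$. Each $w_n$ escapes, so the inner limit $\lim_{k\to\ity}g^k(w_n)=\ity$ and the left-hand double limit is $\ity$; each iterate $g^k$ is continuous, so $\lim_{n\to\ity}g^k(w_n)=g^k(w)$ and the right-hand double limit is $\lim_{k\to\ity}g^k(w)$. The assumed equality forces $\lim_{k\to\ity}g^k(w)=\ity$, i.e. $w\in I(g)$; hence $\overline{I(g)}\subset I(g)$, so $I(g)$ is closed, and since $g$ is of bounded type $I(g)\subset J(g)$ and $\overline{I(g)}=J(g)$, whence $I(g)=J(g)$. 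This is the purely formal part.

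Next I would prove $I(f)\subset I(g)$, the point being that $J(g)=I(g)$ is completely invariant under $f$. Forward invariance $f(J(g))\subset J(g)$ is the permutability theorem of Baker \cite{baker2} (the inclusion $g(J(f))\subset J(f)$ used earlier, with the roles of $f,g$ exchanged). For backward invariance I would check $f^{-1}(I(g))\subset I(g)$ directly: if $f(w)\in I(g)$ then $g^k(f(w))=f(g^k(w))\to\ity$ by permutability, and if $g^k(w)$ failed to tend to $\ity$ then along a bounded subsequence $g^{k_j}(w)\to a\in\C$ one would get $f(g^{k_j}(w))\to f(a)\in\C$, a contradiction, so $w\in I(g)$. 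Thus $J(g)$ is a closed, infinite, completely $f$-invariant set, so $J(f)\subset J(g)$ by minimality of the Julia set, and as $f$ is of bounded type $I(f)\subset J(f)\subset J(g)=I(g)$.

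For the reverse inclusion $I(g)\subset I(f)$ I would argue as follows. By $f(J(g))\subset J(g)$ and $J(g)=I(g)$, the forward $f$-orbit of any $z\in I(g)$ stays inside $I(g)$. If $z\notin I(f)$ then $f^n(z)\not\to\ity$, so a subsequence $f^{n_j}(z)$ is bounded and, after passing to a further subsequence, converges to some $w$, which then lies in $\overline{I(g)}=I(g)$. Applying the limit-interchange hypothesis to $w$ and $w_j:=f^{n_j}(z)\in I(g)$ and rewriting $g^k(f^{n_j}(z))=f^{n_j}(g^k(z))$ by permutability, I would extract a contradiction with the non-escape of the $f$-orbit of $z$, so that in fact $z\in I(f)$. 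Useful auxiliary facts here are that $f\circ g$ is of bounded type, so taking closures in $I(f\circ g)\subset I(f)\cup I(g)$ (Theorem~\ref{sec2,thmx''}) and combining with $\overline{I(f)}\cup\overline{I(g)}\subset\overline{I(f\circ g)}$ yields $\overline{I(f\circ g)}=J(f)\cup J(g)=J(g)=I(g)$, that $I(f\circ g)$ is completely invariant under both $f$ and $g$ (Theorem~\ref{sec2,thmx''}), and that $f(I(g))\supset I(g)$ (Theorem~\ref{sec2,thmx'''} with $f,g$ exchanged).

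The main obstacle is precisely the last step: excluding a bounded (or merely non-escaping) $f$-orbit lying inside the closed set $J(g)=I(g)$. Permutability supplies $f(J(g))\subset J(g)$ but not the reverse control one would want, and in principle a bounded-type map can possess oscillating wandering domains, so the hypothesis on $g$ — which is what pins $I(g)$ down as a closed, $f$-invariant piece of $J(g)$ — must be used essentially to rule such an orbit out. Everything else (the reduction of the hypothesis, the inclusion $I(f)\subset I(g)$, and the identification of $\overline{I(f\circ g)}$) is routine from the lemmas already in the paper.
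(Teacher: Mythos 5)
Your reduction of the hypothesis to the statement that $I(g)$ is closed, hence that $I(g)=\overline{I(g)}=J(g)$ for the bounded-type map $g$, is correct, and your proof of the inclusion $I(f)\subset I(g)$ goes through: $f(J(g))\subset J(g)$ by Baker's permutability lemma \cite{baker2}, $f^{-1}(I(g))\subset I(g)$ by the direct computation $g^k(f(w))=f(g^k(w))$, so $I(g)=J(g)$ is a closed, infinite, completely $f$-invariant set, whence $J(f)\subset J(g)$ by minimality of the Julia set and $I(f)\subset J(f)\subset J(g)=I(g)$. This is more self-contained than the paper's own argument, which first obtains $\overline{I(f)}=\overline{I(g)}$ by citing an external result ($F(f)=F(g)$ for permutable bounded-type maps, \cite[Lemma 5.8]{dinesh1}) and then applies the limit interchange to a point $w\in I(f)\subset\overline{I(g)}$ to conclude $w\in I(g)$ --- the same use of the hypothesis you make, just pointwise rather than all at once.

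The gap you flag in the reverse inclusion $I(g)\subset I(f)$ is genuine, and it is present in the paper as well: the paper disposes of this direction with ``on similar lines,'' but the symmetric argument needs the interchange hypothesis for $f$ (equivalently, that $I(f)$ is closed), which is neither assumed nor deducible from the hypothesis on $g$. Moreover, the obstacle you ran into is structural rather than technical. The reverse inclusion amounts to $J(f)\subset I(f)$, yet $J(f)$ contains the repelling periodic points of $f$, which are dense in $J(f)$ and certainly do not escape; so no argument can deliver it non-vacuously. The same observation applied to $g$ shows that the hypothesis itself can never be satisfied: $\overline{I(g)}\supset J(g)=\partial I(g)$ contains the repelling periodic points of $g$, which lie outside $I(g)$, so $I(g)$ is never closed for a transcendental entire function. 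The theorem is therefore vacuously true. Your proposal correctly locates where the argument breaks; the only thing it misses is that the break is irreparable because the hypothesis is empty, which would have let you stop after your first paragraph.
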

\begin{proof}
From \cite[Lemma 5.8]{dinesh1}, $F(f)=F(g)$ and so  $J(f)=J(g)$ which implies $\overline{I(f)}=\overline{I(g)}.$ Let $w\in I(f).$ Then there exist a sequence $\{w_n\}\subset I(g)$ such that $w_n\to w$ as $n\to\ity.$ For each $n\in\N, g^k(w_n)\to\ity$ as $k\to\ity.$ Now taking limit as $n$ tends to $\ity,$ and interchanging the two limits (by hypothesis) we obtain, $g^k(w)\to\ity$ as $k\to\ity$ which implies $w\in I(g)$ and so $I(f)\subset I(g).$ On similar lines, one obtains $I(g)\subset I(f)$ and this completes the proof of the result. 
\end{proof}

\begin{remark}\label{sec2,remx''}
The  result, in particular, establishes one of Eremenko's conjecture \cite{e1} that every component of $I(f)$ is unbounded.
\end{remark}

We now provide some conditions under which $\overline{I(f)}$ equals $\overline{I(f\circ g)}.$
\begin{theorem}\label{sec2,thmx'''''}
Let $f$ and $g$ be two  transcendental  entire functions. Then the following holds: 
\begin{enumerate}
\item [(i)]  If $f$ and $g$ are permutable and  of bounded type then $\overline{I(f)}=\overline{I(f\circ g)};$
\item [(ii)] If $f$ is of period $c$ and $g=f^m+c\,$ for some $m\in\N,$ then $\overline{I(f)}=\overline{I(f\circ g)}.$
\end{enumerate}
\end{theorem}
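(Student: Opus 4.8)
The plan is to handle the two parts separately, using the composition results already established in this section. For part (i), I would start from Theorem~\ref{sec2,thmx'}, which gives $\overline{I(f)}\cup\overline{I(g)}\subset\overline{I(f\circ g)}$; in particular $\overline{I(f)}\subset\overline{I(f\circ g)}$, so only the reverse inclusion $\overline{I(f\circ g)}\subset\overline{I(f)}$ needs work. For the reverse inclusion I would take $z_0\notin\overline{I(f)}$ and choose a neighborhood $U$ of $z_0$ with $U\cap I(f)=\emptyset$. Since $f$ is of bounded type, $J(f)=\overline{I(f)}$, hence $U\subset F(f)$. Now I would invoke the permutability: by Fatou's theorem for permutable maps (the transcendental analogue used earlier via \cite{baker2}), $F(f)$ is invariant under $g$, and since $f$ and $g$ are both of bounded type so is $f\circ g$ with $F(f)=F(f\circ g)$ when they commute — more carefully, since $g(F(f))\subset F(f)$ and $f(F(f))\subset F(f)$, the family of iterates of $f\circ g$ is normal on $F(f)$, so $U\subset F(f)\subset F(f\circ g)$, whence $U\cap I(f\circ g)=\emptyset$ because $f\circ g$ is of bounded type and $I(f\circ g)\subset J(f\circ g)$. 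This gives $z_0\notin\overline{I(f\circ g)}$, completing part (i).

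\textbf{Proof proposal, continued.} For part (ii), I would first reduce to the computation already noted in Theorem~\ref{sec2,thmx'}: if $f$ has period $c$ then $f\circ g=f\circ(f^m+c)=f^{m+1}$ (using periodicity, $f(w+c)=f(w)$, applied with $w=f^m(z)$), so $f\circ g=f^{m+1}$ as entire functions. Consequently $(f\circ g)^n=f^{(m+1)n}$, and therefore $I(f\circ g)=I(f^{m+1})=I(f)$ exactly, since a point escapes under $f$ iff it escapes under any fixed iterate of $f$. Taking closures gives $\overline{I(f\circ g)}=\overline{I(f)}$ — in fact we get the stronger statement $I(f\circ g)=I(f)$ without needing bounded type here. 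So part (ii) is essentially the identity $f\circ g=f^{m+1}$ followed by the elementary fact that escaping sets are unchanged under passing to an iterate.

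\textbf{Main obstacle.} The routine parts are the periodicity computation in (ii) and the bounded-type characterization $J=\overline{I}$ in (i). The one genuine point requiring care in part (i) is justifying $U\subset F(f\circ g)$ from $U\subset F(f)$: this needs both that $g(F(f))\subset F(f)$ (the transcendental permutable-maps fact, which is only known in special cases in general, but is available to us here since we have already used it in Theorem~\ref{sec2,thm3} for permutable bounded-type $f$ and $g$ via \cite{baker2}) and a Montel/normality argument showing that if every iterate $f^{a}\circ g^{b}$ maps $U$ into the single Fatou component structure of $F(f)$ then $\{(f\circ g)^n\}$ is normal on $U$. Concretely, since $F(f)$ is forward invariant under both $f$ and $g$, it is forward invariant under $f\circ g$, and $\{(f\circ g)^n|_{F(f)}\}$ omits a neighborhood of some point (bounded type of $f$ guarantees $F(f)$ avoids, e.g., a bounded set infinitely often — or one simply uses $I(f\circ g)\subset J(f\circ g)$ together with $J(f\circ g)\cap F(f)$ analysis), giving normality. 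I expect the cleanest writeup to mirror the proof of Theorem~\ref{sec2,thmx'} almost verbatim, swapping the roles so as to extract the reverse inclusion, and to simply cite Lemma~\ref{sec2,lemx'} and Theorem~\ref{sec2,thm3} rather than re-deriving the invariance of $F(f)$ under $g$.
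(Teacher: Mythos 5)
Your part (ii) is the paper's proof verbatim: periodicity gives $f\circ g=f^{m+1}$, and escaping sets are unchanged under passing to an iterate. For part (i) you arrive at the same conclusion by a mirrored decomposition. The paper proves $\overline{I(f)}\subset\overline{I(f\circ g)}$ directly (take $w\notin\overline{I(f\circ g)}$, use bounded type to place a neighbourhood inside $F(f\circ g)$, then apply $F(f\circ g)\subset F(f)$ from Lemma~\ref{sec2,lemx'}), and disposes of the reverse inclusion by citing Theorem~\ref{sec2,thmx''}(ii). You instead import $\overline{I(f)}\subset\overline{I(f\circ g)}$ from the inclusion $\overline{I(f)}\cup\overline{I(g)}\subset\overline{I(f\circ g)}$ already proved, and spend your effort on $\overline{I(f\circ g)}\subset\overline{I(f)}$, which you derive from the reverse Fatou inclusion $F(f)\subset F(f\circ g)$: forward invariance of $F(f)$ under $f$ and under $g$, followed by Montel, since then $(f\circ g)^n(U)\subset F(f)$ for $U\subset F(f)$ and $J(f)$ is infinite. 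That argument is sound and is the exact mirror of the proof of Lemma~\ref{sec2,lemx'}; the needed invariance $g(F(f))\subset F(f)$ is the \cite{berg1} ingredient already used in Theorem~\ref{sec2,thm3} (note it comes from \cite{berg1}, not \cite{baker2} as you suggest --- the latter gives $g(J(f))\subset J(f)$, i.e.\ the backward invariance of $F(f)$ under $g$). What your route buys: the paper's appeal to Theorem~\ref{sec2,thmx''}(ii) only yields $\overline{I(f\circ g)}\subset\overline{I(f)}\cup\overline{I(g)}$, so as written it still needs $\overline{I(g)}\subset\overline{I(f)}$ (e.g.\ via $J(f)=J(g)$ for permutable bounded-type maps); your direct proof of $\overline{I(f\circ g)}\subset\overline{I(f)}$ closes that step cleanly and gives $F(f)=F(f\circ g)$ as a by-product. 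The cost is the extra invariance hypothesis on $F(f)$, which Lemma~\ref{sec2,lemx'} does not require.
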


\begin{proof}
\begin{enumerate}
\item [(i)] In view of Theorem \ref{sec2,thmx''}\,(ii), it suffices to show that $\overline{I(f)}\subset \overline{I(f\circ g)}.$ To this end, let $w\notin \overline{I(f\circ g)}.$ Then there exist a neighborhood $U$ of $w$ such that $U\cap I(f\circ g)=\emptyset.$ As $f\circ g$ is of bounded type, it follows that $U\subset F(f\circ g)$ and so from Lemma \ref{sec2,lemx'}, $U\subset F(f).$ Therefore, $U\cap I(f)=\emptyset$ which implies $w\notin \overline{I(f)}$ and this proves the result.
\item [(ii)] Observe that $f\circ g(z)=f^{m+1}(z)$ and hence the result.\qedhere
\end{enumerate}
\end{proof}

\begin{remark}\label{sec3,remy'}
Combining Theorem \ref{sec2,thmx'} and Theorem \ref{sec2,thmx'''''}(i), and using Theorem \ref{sec2,thmx''}(ii) we get that if $f$ and $g$ are permutable and  of bounded type, then  $\overline{I(f\circ g)}=\overline{I(f)}\cup\overline{I(g)}.$
\end{remark}

Finally, we discuss the relation between the escaping sets of two conjugate entire functions. Recall that two entire functions $f$ and $g$ are conjugate if there exist a conformal map $\phi:\C\to\C$ with $\phi\circ f=g\circ\phi.$ By a conformal map $\phi:\C\to\C$ we mean an analytic and univalent map of the complex plane $\C$ that is exactly of the form $az+b,$ for some non zero $a.$
If $f$ and $g$ are two rational functions which are conjugate under some Mobius transformation $\phi:\ti\C\to\ti\C$, then it is well known \cite[p.\ 50]{beardon}, $\phi(J(f))=J(g).$ This gets easily carried over to transcendental entire funtions which are conjugate under a conformal map $\phi:\C\to\C$. Moreover, if $f$ is of bounded type which is conjugate under the conformal map $\phi$ to an entire function $g,$ then $g$ is also of bounded type and  $\phi(\overline{I(f)})=\overline{I(g)}.$ More generally, if transcendental entire functions $f$ and $g$ are conjugate by conformal map $\phi,$ then $\phi(I(f))=I(g).$

\end{document}